\DeclareMathOperator{\pd}{pd}
\DeclareMathOperator{\id}{id}
\newtheorem{theorem}{Theorem}[section]
\newtheorem{prop}[theorem]{Proposition}
\newtheorem{cor}[theorem]{Corollary}
\theoremstyle{definition}
\newtheorem{remark}[theorem]{Remark}
\begin{document}

\thispagestyle{empty}

\title{Several Results Concerning Convex Subcategories}
\author{Stephen Zito\thanks{2020 \emph{Mathematics Subject Classification}: 16G20 \emph{Key words and phrases}: convex subcategories, tilted, quasi-tilted, shod, weakly shod, left and right glued, laura, simply connected, strongly simply connected, left supported, cluster-tilted.}}
        
\maketitle

\begin{abstract}
We apply the notion of a full convex subcategory to a wide range of algebras including tilted, quasi-tilted, shod, weakly shod, left and right glued, laura, simply connected, strongly simply connected, left supported, and cluster-tilted.  In particular, given an algebra $\Lambda$ from one of the aforementioned classes, we investigate certain factor algebras $\Lambda/I$ where $I$ is an ideal generated by a suitable idempotent.   
\end{abstract}

\section{Introduction} 
Given an algebra $\Lambda$ and an ideal $I$ in $\Lambda$, we can ask what properties of $\Lambda$ are inherited by the factor algebra $\Lambda/I$.  In general, this is a difficult problem.  However, if we consider ideals generated by idempotents, that is, $I=<e>$ for some idempotent $e\in\Lambda$, we can impose certain conditions and deduce useful information.  Consider an algebra of the form $\Lambda=KQ/I$ where $K$ is an algebraically closed field, $Q$ is a finite quiver, $KQ$ the path algebra, and $I$ an ideal.  Let $C$ be a full convex subquiver of $Q$, that is, if $p$ is a path from a vertex in $C$ to a vertex in $C$, then each arrow and vertex of $p$ is in $C$.  Let $e_C$ be the idempotent associated to the sum of the vertices in $C$ and $e'_C$ be the idempotent associated to the sum of the vertices not in $C$.  We wish to examine $\Lambda/<e'_C>$.      
 \par
 We start by considering several classes of algebras that have been extensively studied in the representation theory of finite dimensional algebras, namely the tilted algebras $\cite{HR}$, quasi-tilted algebras $\cite{HRS}$, the shod algebras $\cite{CL,RS}$, the weakly shod algebras $\cite{CL2}$, the left and right glued algebras $\cite{AC}$, and finally, the laura algebras $\cite{AC2,RS2}$.  Our first main result says, if $\Lambda$ belongs to one of these classes, so does $\Lambda/<e'_C>$.
\begin{theorem}$\emph{[Theorem~\ref{main1}]}$ 
Let $\Lambda=KQ/I$ and $C$ a full convex subquiver of $Q$.  Let $e_C$ be the idempotent associated to the sum of the vertices in $C$ and $e'_C$ be the idempotent associated to the sum of the vertices not in $C$.
\begin{itemize}
\item[\emph{(a)}] If $\Lambda$ is laura, so is $\Lambda/<e'_C>$.
\item[\emph{(b)}] If $\Lambda$ is left (or right) glued, so is $\Lambda/<e'_C>$.
\item[\emph{(c)}] If $\Lambda$ is weakly shod, so is $\Lambda/<e'_C>$.
\item[\emph{(d)}] If $\Lambda$ is shod, so is $\Lambda/<e'_C>$.
\item[\emph{(e)}] If $\Lambda$ is quasi-tilted, so is $\Lambda/<e'_C>$.
\item[\emph{(f)}] If $\Lambda$ is tilted, so is $\Lambda/<e'_C>$.
\end{itemize}
\end{theorem}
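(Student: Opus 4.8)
The plan is to reduce the statement to a single question about a full convex subcategory, isolate one homological lemma from which parts (a)--(e) follow uniformly, and then treat the tilted case separately.

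\emph{Step 1 (reduction).} Since $C$ is convex, every path in $Q$ between two vertices of $C$ stays inside $C$, so the composite $e_C\Lambda e_C\hookrightarrow\Lambda\twoheadrightarrow\Lambda/\langle e'_C\rangle$ is an isomorphism: it is always surjective, and its kernel $e_C\langle e'_C\rangle e_C$ is spanned by classes of paths between vertices of $C$ passing through a vertex outside $C$, hence is $0$ by convexity. Thus $\Lambda/\langle e'_C\rangle\cong B:=e_C\Lambda e_C$, a full convex subcategory of $\Lambda$, and $\mathrm{mod}\,B$ embeds into $\mathrm{mod}\,\Lambda$ (along $\Lambda\twoheadrightarrow B$) as the full exact subcategory of modules supported on $C$, which is closed under submodules, quotients and extensions. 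In particular $\ind B\subseteq\ind\Lambda$, every nonzero map between $B$-modules stays nonzero in $\mathrm{mod}\,\Lambda$, and every path of nonzero nonisomorphisms between indecomposable $B$-modules is such a path in $\mathrm{mod}\,\Lambda$.

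\emph{Step 2 (the key lemma).} I would prove that for a $B$-module $M$ one has $\pd_B M\le\pd_\Lambda M$ and $\id_B M\le\id_\Lambda M$; in particular $\mathrm{gl.dim}\,B\le\mathrm{gl.dim}\,\Lambda$. The starting point is that the $\Lambda$-projective cover of a $B$-module $M$ involves only the indecomposable projectives $\Lambda e_i$ with $i\in C$ (its top is supported on $C$), so applying the exact functor $e_C(-)$ to a minimal $\Lambda$-projective resolution of $M$ gives an exact complex of $B$-modules over $M$ whose zeroth term $\bigoplus_{i\in C}(Be_i)^{n_i}$ is the $B$-projective cover of $M$, so that $\Omega_B M = e_C(\Omega_\Lambda M)$. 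The real content -- and where convexity is essential -- is that no ``bad'' summand $e_C\Lambda e_j$ with $j\notin C$ occurs in the higher syzygies of $M$: a nonzero contribution of such a summand would force a nonzero path from a vertex of $C$ through $j$ and back into $C$, contradicting convexity. Hence the $e_C$-image of a minimal $\Lambda$-projective resolution is again a projective resolution over $B$, yielding the inequalities; the injective statement is dual. This lemma, together with the bookkeeping it needs (including quivers with oriented cycles, which occur for laura algebras), is the step I expect to be the main obstacle.

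\emph{Step 3 (the six classes).} From the lemma one gets $\mathcal{L}_\Lambda\cap\mathrm{mod}\,B\subseteq\mathcal{L}_B$ and $\mathcal{R}_\Lambda\cap\mathrm{mod}\,B\subseteq\mathcal{R}_B$, where $\mathcal{L}$ (resp.\ $\mathcal{R}$) is the left (resp.\ right) part: a predecessor of a $B$-module $M$ in $\mathrm{mod}\,B$ is a predecessor in $\mathrm{mod}\,\Lambda$, hence of projective dimension $\le 1$ over $\Lambda$, hence over $B$. Together with $\ind B\subseteq\ind\Lambda$ this gives (a): if $\ind\Lambda\setminus(\mathcal{L}_\Lambda\cup\mathcal{R}_\Lambda)$ is finite then so is $\ind B\setminus(\mathcal{L}_B\cup\mathcal{R}_B)$ (laura), and (b): the same with a single part (left, resp.\ right, glued). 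For (d), every $M\in\ind B\subseteq\ind\Lambda$ has $\pd_\Lambda M\le1$ or $\id_\Lambda M\le1$, hence $\pd_B M\le1$ or $\id_B M\le1$ by the lemma (shod); for (e) one adds $\mathrm{gl.dim}\,B\le\mathrm{gl.dim}\,\Lambda\le2$ and $\ind B\subseteq\mathcal{L}_B\cup\mathcal{R}_B$, the case of (a) with empty exceptional set (quasi-tilted). For (c) I would use the description of weakly shod algebras as the laura algebras whose connecting component is directed: the finitely many $B$-modules outside $\mathcal{L}_B\cup\mathcal{R}_B$ lie outside $\mathcal{L}_\Lambda\cup\mathcal{R}_\Lambda$, hence are directing in $\mathrm{mod}\,\Lambda$, hence directing in $\mathrm{mod}\,B$ since a cyclic path of nonzero maps among $B$-modules would be one in $\mathrm{mod}\,\Lambda$.

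\emph{Step 4 (the tilted case).} Part (f) does not follow formally from (e) and requires separating tilted algebras from the remaining (canonical-type) quasi-tilted ones. Here I would use that $\Lambda$ is tilted if and only if $\mathrm{mod}\,\Lambda$ admits a complete slice, and show that a complete slice $\Sigma$ of $\mathrm{mod}\,\Lambda$ restricts to a complete slice of $\mathrm{mod}\,B$: convexity of $C$ is used to check that the relevant restricted modules remain indecomposable, are sincere over $B$, and are convex in $\ind B$, so that their endomorphism algebra stays hereditary. Besides the lemma of Step 2, this transfer of complete slices and the choice of a characterization of weakly shod compatible with the embedding $\mathrm{mod}\,B\hookrightarrow\mathrm{mod}\,\Lambda$ are the remaining points I expect to require genuine care.
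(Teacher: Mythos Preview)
Your proposal is correct in spirit but takes a much longer route than the paper, which dispatches the theorem in two lines: by a result of Happel and of Assem--Coelho (the paper's Theorem~3.1), the algebra $e\Lambda e$ inherits each of the six properties for \emph{any} idempotent $e$, not just convex ones; convexity is then used only to identify $e_C\Lambda e_C$ with the quotient $\Lambda/\langle e'_C\rangle$ (the paper's Proposition~2.1, due to Green--Marcos). So your Step~1 reproves that proposition, while your Steps~2--4 amount to reproving, in the convex case only, a special instance of a theorem that the paper simply cites. What your approach buys is self-containment: the $\Ext$-preservation lemma of Step~2 is genuinely a convexity phenomenon (the paper notes it in the remark following the theorem) and gives a clean uniform treatment of (a)--(e). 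What the paper's approach buys is brevity and strength: the cited result $e\Lambda e\in\mathcal{X}\Leftarrow\Lambda\in\mathcal{X}$ holds without convexity, and its proof for the tilted case goes through derived categories rather than complete slices.

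On that last point, your Step~4 is the one place I would flag a real concern. Restricting a complete slice $\Sigma$ of $\bmod\Lambda$ to the modules supported on $C$ need not yield a complete slice of $\bmod B$: sincerity over $B$ and the section/slice convexity conditions do not obviously survive, and ``the relevant restricted modules'' is doing a lot of unspecified work. Happel's original argument that $e\Lambda e$ is tilted when $\Lambda$ is uses the derived-equivalence characterisation of tilted algebras, not slice restriction; if you want a direct module-theoretic argument you will likely need to pass through the tilting module (writing $\Lambda=\End_H(T)$ and analysing $\End_H(eT)$) rather than through the slice in $\bmod\Lambda$. Everything else in your outline is sound.
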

For our second main result, we consider simply connected and strongly simply connected algebras $\cite{AS,S}$.
\begin{theorem}$\emph{[Theorem~\ref{main2}]}$ 
\label{simple}
Let $\Lambda=KQ/I$ and $C$ a full convex subquiver of $Q$.  Let $e_C$ be the idempotent associated to the sum of the vertices in $C$ and $e'_C$ be the idempotent associated to the sum of the vertices not in $C$.  If $\Lambda$ is strongly simply connected, then $\Lambda/<e'_C>$ is simply connected.
\end{theorem}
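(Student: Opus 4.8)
The plan is to reduce the statement to the defining property of strong simple connectedness --- that every convex subcategory of $\Lambda$ is simply connected --- once $\Lambda/\langle e_C'\rangle$ has been identified with the full convex subcategory of $\Lambda$ supported on the vertices of $C$.

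The first step is that identification, which presumably appears as a preliminary lemma used throughout the paper: because $C$ is a \emph{full convex} subquiver of $Q$, the composite $e_C\Lambda e_C\hookrightarrow\Lambda\twoheadrightarrow\Lambda/\langle e_C'\rangle$, that is, the restriction of the canonical projection to the (non-unital) subring $e_C\Lambda e_C$, is an isomorphism of $K$-algebras. To see this one fixes a basis of $\Lambda$ consisting of classes of paths of $Q$ and partitions it into paths all of whose vertices lie in $C$ and paths meeting a vertex outside $C$. Convexity gives that the former classes span $e_C\Lambda e_C$ (a path between two vertices of $C$ stays in $C$), and that the latter span $\langle e_C'\rangle$ (using that a path meeting a vertex outside $C$ must have its source or target outside $C$, together with the fact that relations are parallel); these two spans intersect trivially, so the comparison map is injective, and it is clearly surjective and respects products and units. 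Hence $\Lambda/\langle e_C'\rangle\cong KQ_C/I_C$, where $Q_C$ is the full subquiver of $Q$ on the vertices of $C$ and $I_C$ the induced ideal --- precisely the full convex subcategory of $\Lambda$ determined by $C$.

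With this in hand the theorem is nearly formal. Since $\Lambda$ is strongly simply connected it is triangular, so $Q$ has no oriented cycles; a full subquiver inherits this, whence $\Lambda/\langle e_C'\rangle$ is triangular and asking whether it is simply connected is meaningful. But $\Lambda/\langle e_C'\rangle$ is a full convex subcategory of the strongly simply connected algebra $\Lambda$, and strong simple connectedness of $\Lambda$ amounts to saying that every convex subcategory of $\Lambda$ is simply connected; so $\Lambda/\langle e_C'\rangle$ is simply connected. If one instead starts from a characterization of strong simple connectedness via the separation condition or the vanishing of the first Hochschild cohomology of every convex subcategory, the same reasoning applies, since a convex subquiver of $Q_C$ is --- by convexity of $C$ in $Q$ --- again convex in $Q$; one then in fact concludes that $\Lambda/\langle e_C'\rangle$ is itself strongly simply connected.

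The only genuine content is the identification in the second paragraph; even there the work is elementary, and the things to be careful about are that convexity of $C$ is truly used for injectivity (it fails for non-convex $C$, e.g. $C=\{1,3\}$ in the quiver $1\to 2\to 3$) and that the comparison map is a \emph{unital} homomorphism. A secondary point to dispatch is connectedness: if $C$ is a disconnected subquiver then $\Lambda/\langle e_C'\rangle$ is a product of algebras, so one takes $C$ connected (as is surely intended), and then the resulting convex subcategory is connected and the usual notion of simple connectedness applies directly.
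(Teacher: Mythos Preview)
Your argument is correct and matches the paper's approach: the paper states the result as ``immediate'' from the definition of strong simple connectedness together with Proposition~\ref{crux} (the identification $\Lambda/\langle e'_C\rangle\cong e_C\Lambda e_C$ for convex $C$, cited from \cite{GM}), which is exactly the reduction you carry out. You supply considerably more detail than the paper does---a sketch of the identification itself, the triangularity check, and the observation that in fact $\Lambda/\langle e'_C\rangle$ is \emph{strongly} simply connected (since convex subquivers of $C$ are convex in $Q$)---but the core idea is identical.
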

We have the following corollary when $\Lambda$ is representation-finite.
\begin{cor}$\emph{[Corollary~\ref{cor1}]}$ 
Let $\Lambda=KQ/I$ be representation finite and $C$ a full convex subquiver of $Q$.  Let $e_C$ be the idempotent associated to the sum of the vertices in $C$ and $e'_C$ be the idempotent associated to the sum of the vertices not in $C$. 
\begin{itemize}
\item[\emph{(a)}] If $\Lambda$ is strongly simply connected, then $\Lambda/<e'_C>$ is strongly simply connected.
\item[\emph{(b)}] If $\Lambda$ is simply connected, then $\Lambda/<e'_C>$ is strongly simply connected.
\end{itemize}
\end{cor}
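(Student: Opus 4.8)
\medskip\noindent\emph{Proof plan.}\ The idea is to reduce the Corollary to Theorem~\ref{main2} together with the classical fact that, for representation-finite algebras, simple connectedness and strong simple connectedness coincide. Two preliminary observations make this reduction work. First, $\Lambda/\langle e'_C\rangle$ is a quotient algebra of $\Lambda$: its module category is the full subcategory of $\operatorname{mod}\Lambda$ consisting of the modules annihilated by $\langle e'_C\rangle$, and therefore representation-finiteness of $\Lambda$ forces representation-finiteness of $\Lambda/\langle e'_C\rangle$. Second, since $C$ is convex in $Q$, no path joining two vertices of $C$ can pass through a vertex outside $C$; hence the canonical surjection identifies $\Lambda/\langle e'_C\rangle$ with the full convex subcategory $e_C\Lambda e_C$ of $\Lambda$ supported on the vertices of $C$ (in particular, triangularity is preserved).

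For part (a), assume $\Lambda$ is strongly simply connected. By Theorem~\ref{main2}, $\Lambda/\langle e'_C\rangle$ is simply connected, and by the first observation it is representation-finite; the equivalence recalled above (see \cite{AS}, cf.\ \cite{S}) then promotes this to strong simple connectedness. One can in fact dispense with Theorem~\ref{main2} here: $\Lambda/\langle e'_C\rangle\cong e_C\Lambda e_C$ is a convex subcategory of $\Lambda$, and since a convex subcategory of a convex subcategory of $\Lambda$ is again a convex subcategory of $\Lambda$, every convex subcategory of $\Lambda/\langle e'_C\rangle$ is a convex subcategory of $\Lambda$ and hence simply connected --- that is, a convex subcategory of a strongly simply connected algebra is again strongly simply connected.

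For part (b), assume $\Lambda$ is simply connected. Since $\Lambda$ is representation-finite, the same equivalence shows $\Lambda$ is in fact strongly simply connected, so part (a) applies verbatim and yields that $\Lambda/\langle e'_C\rangle$ is strongly simply connected.

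The only genuinely external ingredient --- and the sole place where the representation-finiteness hypothesis is essential --- is the equivalence ``simply connected $\Leftrightarrow$ strongly simply connected'' for representation-finite algebras; everything else is routine bookkeeping with idempotent quotients and convex subquivers. The remaining points that need (light) care are the two preliminary observations and one harmless connectedness caveat: if $C$ happens to be disconnected, one runs the argument on each connected component of $\Lambda/\langle e'_C\rangle$ separately, each such component being a connected convex subcategory of the representation-finite algebra $\Lambda$.
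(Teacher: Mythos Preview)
Your argument is correct and follows exactly the paper's approach: use Theorem~\ref{main2} to get that $\Lambda/\langle e'_C\rangle$ is simply connected, note it is representation-finite as a quotient of $\Lambda$, and then upgrade via the equivalence of simple and strong simple connectedness for representation-finite algebras (with part (b) reduced to (a) by that same equivalence). One small correction: the equivalence you invoke is the one the paper attributes to Bretscher--Gabriel \cite{BG}, not to \cite{AS} or \cite{S}.
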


Our third main result deals with the left support and left supported algebras $\cite{ACT}$.
\begin{theorem}$\emph{[Theorem~\ref{main3}]}$ 
\label{left1}
Let $\Lambda=KQ/I$ be an algebra and $\Lambda_{\lambda}$ the left support. Let $e'$ be the direct sum of the idempotents not in $\Lambda_{\lambda}$.
\begin{itemize}
\item[\emph{(a)}] $\Lambda/<e'>$ is a direct product of quasi-tilted algebras.
\item[\emph{(b)}] If $\Lambda$ is left supported, then $\Lambda/<e'>$ is a direct product of tilted algebras.
\end{itemize}
\end{theorem}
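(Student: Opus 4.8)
The plan is to deduce both parts from the behaviour of the left part $\mathcal{L}_{\Lambda}$ under passage to the left support. Recall (see \cite{ACT}) that $\mathcal{L}_{\Lambda}$ is closed under predecessors and under submodules. The first point I would record is that every composition factor $S_i$ of a module $M\in\mathcal{L}_{\Lambda}$ satisfies $P_i\in\mathcal{L}_{\Lambda}$: choosing a submodule $N$ of $M$ having $S_i$ as a quotient module, $S_i$ lies in the top of $N$, so some indecomposable summand $N_0$ of $N$ admits a nonzero map $P_i\to N_0$; since $N_0$ is an indecomposable submodule of $M$ it lies in $\mathcal{L}_{\Lambda}$, and $P_i\to N_0$ exhibits $P_i$ as a predecessor of $N_0$, whence $P_i\in\mathcal{L}_{\Lambda}$. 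Consequently every module of $\mathcal{L}_{\Lambda}$ is supported on the vertices of $\Lambda_{\lambda}$; applied to the indecomposable projectives this shows that if $i$ is a vertex of $\Lambda_{\lambda}$ and there is an arrow $i\to j$ in $Q$, then $S_j$ is a composition factor of $P_i$ and hence $j$ is a vertex of $\Lambda_{\lambda}$. Thus the vertex set of $\Lambda_{\lambda}$ is closed under successors in $Q$; in particular $\Lambda_{\lambda}$ is a full convex subquiver of $Q$, no path in $Q$ between two vertices of $\Lambda_{\lambda}$ meets a vertex outside $\Lambda_{\lambda}$, and therefore $e_{\lambda}\Lambda e_{\lambda}=\Lambda/\langle e'\rangle$, so that both parts are really statements about this common algebra $\Lambda_{\lambda}$.

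For (a) I would first establish that $\mathcal{L}_{\Lambda}=\mathcal{L}_{\Lambda_{\lambda}}$ as full subcategories of $\mathrm{mod}\,\Lambda_{\lambda}\subseteq\mathrm{mod}\,\Lambda$. For $M\in\mathcal{L}_{\Lambda}$, the projective cover of $M$ over $\Lambda$ is $\bigoplus_i P_i^{\Lambda}$ with $i$ ranging over the top of $M$; by the first paragraph each $P_i^{\Lambda}$ lies in $\mathcal{L}_{\Lambda}$, is supported on $\Lambda_{\lambda}$, and therefore coincides with the indecomposable projective $\Lambda_{\lambda}$-module $P_i^{\Lambda_{\lambda}}$. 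Hence the $\Lambda$- and $\Lambda_{\lambda}$-projective covers of $M$ agree, and because $\mathcal{L}_{\Lambda}$ is closed under submodules the first syzygy $\Omega_{\Lambda}M=\Omega_{\Lambda_{\lambda}}M$ again lies in $\add \mathcal{L}_{\Lambda}$; iterating, every syzygy of $M$ computed over $\Lambda$ agrees with the one computed over $\Lambda_{\lambda}$, so $\pd_{\Lambda}M=\pd_{\Lambda_{\lambda}}M$, and likewise $\Ext^1_{\Lambda}$ and $\Ext^1_{\Lambda_{\lambda}}$ agree on $\mathcal{L}_{\Lambda}$. Since a predecessor of $M$ inside $\mathrm{mod}\,\Lambda_{\lambda}$ is in particular a predecessor in $\mathrm{mod}\,\Lambda$, hence lies in $\mathcal{L}_{\Lambda}$ and has projective dimension at most $1$ over $\Lambda$ and so over $\Lambda_{\lambda}$, we obtain $\mathcal{L}_{\Lambda}\subseteq\mathcal{L}_{\Lambda_{\lambda}}$; the reverse inclusion is part of \cite{ACT} (or can be argued along similar lines). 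Now each indecomposable projective $\Lambda_{\lambda}$-module equals $P_i^{\Lambda_{\lambda}}=P_i^{\Lambda}$ for $i$ a vertex of $\Lambda_{\lambda}$, hence lies in $\mathcal{L}_{\Lambda}=\mathcal{L}_{\Lambda_{\lambda}}$ by the very definition of $e_{\lambda}$; by the Happel--Reiten--Smal{\o} criterion --- an algebra $B$ is a direct product of quasi-tilted algebras if and only if every indecomposable projective $B$-module lies in $\mathcal{L}_B$ (see \cite{HRS} and \cite{ACT}) --- it follows that $\Lambda_{\lambda}$ is a direct product of quasi-tilted algebras, proving (a).

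For (b), suppose $\Lambda$ is left supported and let $E$ be the direct sum of the indecomposable $\Ext$-injective objects of $\mathcal{L}_{\Lambda}$ in the sense of \cite{ACT}. By the first paragraph $E$ lies in $\mathrm{mod}\,\Lambda_{\lambda}$, and by the projective-dimension comparison of (a) one has $\pd_{\Lambda_{\lambda}}E=\pd_{\Lambda}E\le 1$ and $\Ext^1_{\Lambda_{\lambda}}(E,E)=\Ext^1_{\Lambda}(E,E)=0$ (the latter because $E\in\mathcal{L}_{\Lambda}$ and $E$ is $\Ext$-injective in $\mathcal{L}_{\Lambda}$); thus $E$ is a partial tilting $\Lambda_{\lambda}$-module. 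Since $\mathcal{L}_{\Lambda_{\lambda}}=\mathcal{L}_{\Lambda}$, the hypothesis that $\Lambda$ is left supported together with \cite{ACT} shows that the indecomposable summands of $E$ constitute a complete slice in $\mathrm{mod}\,\Lambda_{\lambda}$ (in particular $E$ is a complete tilting module, with one indecomposable summand for each vertex of $\Lambda_{\lambda}$). By the standard characterization of tilted algebras through complete slices (see \cite{HR}), each connected component of $\Lambda_{\lambda}$ is then tilted; hence $\Lambda_{\lambda}$ is a direct product of tilted algebras, which is (b).

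The step I expect to be the real obstacle is the one taken from \cite{ACT} in the proof of (b): deducing, from the facts that $E$ is a partial tilting $\Lambda_{\lambda}$-module and that $\Lambda$ is left supported, that the summands of $E$ form a complete slice in $\mathrm{mod}\,\Lambda_{\lambda}$. This is precisely where the hypothesis of being left supported is used, and pinning it down relies on the torsion pair on the quasi-tilted algebra $\Lambda_{\lambda}$ determined by $\mathcal{L}_{\Lambda_{\lambda}}$ together with the fact that the properly quasi-tilted algebras, i.e. those of canonical type, admit no complete slice. A secondary technical point, also needed in (b), is the identity $\mathcal{L}_{\Lambda}=\mathcal{L}_{\Lambda_{\lambda}}$: the inclusion $\mathcal{L}_{\Lambda}\subseteq\mathcal{L}_{\Lambda_{\lambda}}$ is the syzygy argument above, but the reverse inclusion needs more care, since a $\Lambda$-predecessor of a module in $\mathrm{mod}\,\Lambda_{\lambda}$ need not a priori be a $\Lambda_{\lambda}$-module; I would take this from \cite{ACT} rather than redeveloping the whole of the left-part theory.
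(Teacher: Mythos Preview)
Your argument is correct, but it is far more elaborate than what the paper actually does. The paper's entire proof is one sentence: the preceding discussion records, as results quoted directly from \cite{ACT}, that $\Lambda_{\lambda}$ is a full convex subcategory of $\Lambda$, that $\Lambda_{\lambda}$ is always a direct product of quasi-tilted algebras, and that $\Lambda_{\lambda}$ is a direct product of tilted algebras when $\Lambda$ is left supported; then Proposition~\ref{crux} gives $\Lambda/\langle e'\rangle\cong\Lambda_{\lambda}$, and both (a) and (b) follow at once. In other words, the paper outsources all the representation-theoretic work to \cite{ACT} and only contributes the identification $\Lambda/\langle e'\rangle\cong\Lambda_{\lambda}$ via convexity.

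What you do differently is to partially re-derive the \cite{ACT} results: you prove convexity of $\Lambda_{\lambda}$ from scratch (closure under successors in $Q$), argue $\mathcal{L}_{\Lambda}=\mathcal{L}_{\Lambda_{\lambda}}$ via a syzygy comparison, invoke the Happel--Reiten--Smal{\o} criterion for (a), and sketch the complete-slice argument for (b). This buys a self-contained explanation of \emph{why} the \cite{ACT} theorems hold, at the cost of length; but since you still end up citing \cite{ACT} at the genuinely hard steps (the reverse inclusion $\mathcal{L}_{\Lambda_{\lambda}}\subseteq\mathcal{L}_{\Lambda}$ and the passage from left supported to a complete slice), your proof does not actually avoid the dependence on that reference. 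For the purposes of this paper, the one-line citation-plus-Proposition~\ref{crux} argument is all that is needed.
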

When $\Lambda$ is a cluster-tilted algebra $\cite{BMR1}$, we can be more precise.
\begin{cor}$\emph{[Corollary~\ref{DS}]}$

Let $\Lambda=KQ/I$ be a cluster-tilted algebra and $\Lambda_{\lambda}$ the left support.  Let $e'$ be the direct sum of the idempotents not in $\Lambda_{\lambda}$.  Then $\Lambda/<e'>$ is a direct product of hereditary algebras.
\end{cor}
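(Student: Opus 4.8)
The plan is to start from Theorem~\ref{main3}~(a) and upgrade ``quasi-tilted'' to ``hereditary'' using the homological rigidity forced by the cluster-tilted hypothesis. By Theorem~\ref{main3}~(a), $\Lambda/<e'>$ is a finite product of quasi-tilted algebras $B_{1}\times\cdots\times B_{r}$; since quasi-tilted algebras have global dimension at most $2$, the algebra $\Lambda/<e'>$ has finite global dimension. A product of algebras is a product of hereditary algebras exactly when each of its connected factors is hereditary, so it suffices to prove that $\Lambda/<e'>$ has global dimension at most $1$.

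First I would realize $\Lambda/<e'>$ as the endomorphism algebra of a cluster-tilting object in a Hom-finite $2$-Calabi--Yau triangulated category. Write $\Lambda\cong\End_{\mathcal C}(T)^{\mathrm{op}}$, where $\mathcal C$ is the cluster category of a hereditary algebra $H$ and $T=\bigoplus_{x}T_{x}$ is a cluster-tilting object, the sum indexed by the vertices $x$ of $Q$. Under this identification the idempotent $e'$ corresponds to the direct summand $T''=\bigoplus_{x}T_{x}$ over the vertices $x$ not in $\Lambda_{\lambda}$, and $<e'>$ is precisely the ideal of endomorphisms of $T$ factoring through $\add T''$. Hence, writing $T'$ for the complementary summand, $\Lambda/<e'>$ is the endomorphism algebra of the image of $T'$ in the Iyama--Yoshino Calabi--Yau reduction of $\mathcal C$ along $\add T''$; since $T$ is rigid, $T'$ lies in that reduction and becomes a cluster-tilting object there, so $\Lambda/<e'>$ is again the endomorphism algebra of a cluster-tilting object in a Hom-finite $2$-Calabi--Yau triangulated category. (Alternatively one may quote the fact that the quotient of a cluster-tilted algebra by an ideal generated by an idempotent is a product of cluster-tilted algebras and then argue with each $B_{i}$ individually.)

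To conclude I would apply the theorem of Keller--Reiten that the endomorphism algebra of a cluster-tilting object in a Hom-finite $2$-Calabi--Yau triangulated category is Gorenstein of Gorenstein dimension at most $1$. Thus $\Lambda/<e'>$ is $1$-Gorenstein, so its finitistic dimension is at most $1$; together with the finiteness of its global dimension established above, this forces the global dimension of $\Lambda/<e'>$ to be at most $1$. Therefore each $B_{i}$ is hereditary and $\Lambda/<e'>$ is a direct product of hereditary algebras.

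The step I expect to be the main obstacle is the identification in the second paragraph: one must check carefully that $<e'>$ coincides with the ideal of maps factoring through $\add T''$ and that the hypotheses of the Iyama--Yoshino reduction hold --- that $\add T''$ is a functorially finite rigid subcategory and that $T'$ passes to a cluster-tilting object in the reduced category. Granting this bookkeeping, the rest is a citation of Keller--Reiten together with Theorem~\ref{main3}~(a) and the elementary observation that a $1$-Gorenstein algebra of finite global dimension is hereditary.
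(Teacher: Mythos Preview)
Your argument is correct and follows the same overall shape as the paper's: use Theorem~\ref{main3}(a) to get a product of quasi-tilted algebras (hence global dimension $\leq 2$), then use the cluster-tilted hypothesis to force global dimension $\leq 1$. The difference is in how that second step is executed. The paper simply quotes \cite{BMR2} to say $\Lambda/\langle e'\rangle$ is again cluster-tilted, then invokes the well-known dichotomy that a cluster-tilted algebra has global dimension $1$ or $\infty$; since $\leq 2$ rules out $\infty$, one is done in two lines. You instead rebuild this fact from scratch via Iyama--Yoshino reduction and the Keller--Reiten $1$-Gorenstein theorem, which is exactly what underlies the ``well-known'' dichotomy. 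So your route is more self-contained and works in any Hom-finite $2$-CY setting, but it is considerably heavier than what is needed here; note that you yourself flag the BMR alternative in parentheses, and that is precisely the shortcut the paper takes. The bookkeeping you worry about (that $\langle e'\rangle$ is the ideal of maps factoring through $\add T''$, and that $T'$ becomes cluster-tilting in the reduction) is fine, but unnecessary once you cite \cite{BMR2}.
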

Let $\Lambda$ be cluster-tilted with $e\in\Lambda$ an idempotent.  Then the full subcategory $e\Lambda e$ is not necessarily cluster-tilted.  However, this is the case when $e\Lambda e$ is convex.
\begin{prop}$\emph{[Proposition~\ref{cluster}]}$ 

Let $\Lambda=KQ/I$ be a cluster-tilted algebra with $C$ a full convex subquiver of $Q$.  Let $e_C$ be the idempotent associated to the sum of the vertices in $C$.  Then $e_C\Lambda e_C$ is cluster-tilted.
\end{prop}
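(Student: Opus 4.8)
The plan is to realize $e_C\Lambda e_C$ as the relation extension of a tilted algebra and then invoke the characterization (due to Assem--Br\"ustle--Schiffler) of cluster-tilted algebras: an algebra is cluster-tilted if and only if it is isomorphic to the trivial extension $B\ltimes\Ext_B^2(DB,B)$ of a tilted algebra $B$ by the $B$-$B$-bimodule $\Ext_B^2(DB,B)$, the \emph{relation extension} $\widetilde B$ of $B$. So it suffices to produce a tilted algebra $B'$ with $e_C\Lambda e_C\cong\widetilde{B'}$.

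First I would set up the reduction. Write $\Lambda\cong\widetilde B=B\ltimes\Ext_B^2(DB,B)$ with $B=KQ_B/I_B$ tilted. Since $\widetilde B$ and $B$ have the same primitive idempotents, $Q_B$ has the same vertices as $Q=Q_\Lambda$ and its arrows form a subset of those of $Q$; hence every path of $Q_B$ is a path of $Q$, and $C$ is a full convex subquiver of $Q_B$ as well. Convexity gives the isomorphism $e_C\Lambda e_C\cong\Lambda/\langle e'_C\rangle$ (and likewise $e_CBe_C\cong B/\langle e'_C\rangle$): a path between two vertices of $C$ never leaves $C$, so $e_C\langle e'_C\rangle e_C=0$ and the natural map $x\mapsto x+\langle e'_C\rangle$ is a ring isomorphism. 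Therefore, by Theorem~\ref{main1}(f), $B':=e_CBe_C\cong B/\langle e'_C\rangle$ is tilted. Moreover a trivial extension splits along the idempotent $e_C$, so
\[
 e_C\Lambda e_C\cong e_C\widetilde B e_C=e_CBe_C\ltimes e_C\Ext_B^2(DB,B)e_C=B'\ltimes\bigl(e_C\Ext_B^2(DB,B)e_C\bigr).
\]

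The crux is then to identify the bimodule, i.e. to prove $e_C\Ext_B^2(DB,B)e_C\cong\Ext_{B'}^2(DB',B')$ as $B'$-$B'$-bimodules. Decomposing along the idempotents $e_i,e_j$ with $i,j\in C$ reduces this to natural isomorphisms $\Ext_B^2\!\bigl(D(e_iB),Be_j\bigr)\cong\Ext_{B'}^2\!\bigl(D(e_iB'),B'e_j\bigr)$ for $i,j\in C$, noting that the exact restriction functor $e_C(-)\colon\operatorname{mod}B\to\operatorname{mod}B'$ carries $D(e_iB)\mapsto D(e_iB')$ and $Be_j\mapsto B'e_j$. The idea is to compare a minimal projective resolution of $D(e_iB)$ over $B$ with one of $D(e_iB')$ over $B'$: applying $e_C(-)$ and discarding the summands $Be_l$ with $l\notin C$ should yield the latter, and one must check those discarded summands do not affect $\Ext^2(-,Be_j)$ for $j\in C$. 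This is where convexity is essential, through the vanishing $\Hom_B(X,Y)=0$ whenever $X$ is supported on the vertices outside $C$ and $Y$ on the vertices of $C$, and the fact that no path between two vertices of $C$ passes outside $C$. I expect the handling of the off-$C$ summands of the higher syzygies to be the main obstacle. An alternative, probably cleaner route is the quiver-with-potential description of $\widetilde B$ as the Jacobian algebra of $Q_B$ enlarged by one arrow $\widetilde\rho$ for each relation $\rho$ in a minimal system of relations of $B$, with potential $W=\sum_\rho\widetilde\rho\rho$: convexity forces that whenever an arrow of $Q_B$ lying inside $C$ occurs in a relation $\rho$, that relation is between vertices of $C$ (otherwise closing $\rho$ up by $\widetilde\rho$ produces a path between two vertices of $C$ passing outside $C$), and such relations are exactly the minimal relations of $B'$; carrying this through should give $e_C\widetilde B e_C\cong\widetilde{B'}$ directly.

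Putting the pieces together, $e_C\Lambda e_C\cong B'\ltimes\Ext_{B'}^2(DB',B')=\widetilde{B'}$ with $B'$ tilted, so $e_C\Lambda e_C$ is cluster-tilted by the relation-extension characterization, which is the assertion.
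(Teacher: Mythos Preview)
Your approach is far more elaborate than needed and leaves the decisive step unproved. The paper dispatches the statement in two lines: by \cite{BMR2}, any quotient $\Lambda/\langle e\rangle$ of a cluster-tilted algebra by an idempotent ideal is again cluster-tilted, and Proposition~\ref{crux} gives $e_C\Lambda e_C\cong\Lambda/\langle e'_C\rangle$ from convexity of $C$. You actually invoke this very isomorphism yourself in your set-up, so you are one citation of \cite{BMR2} away from being done.

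The relation-extension route is a genuinely different idea, but the bimodule identification $e_C\Ext_B^2(DB,B)e_C\cong\Ext_{B'}^2(DB',B')$ is a real gap, and you rightly flag it as the crux. The comparison-of-$\Ext$ fact for convex subcategories (cf.\ the Remark after Theorem~\ref{main1}) applies only to modules annihilated by $e'_C$; neither $D(e_iB)$ nor $Be_j$ is a $B'$-module in general, since injectives and projectives at vertices of $C$ can be supported outside $C$. Your syzygy sketch would have to track how the off-$C$ projective summands in a minimal resolution of $D(e_iB)$ interact with $\Hom_B(-,Be_j)$ through degree two, and the quiver-with-potential alternative needs not just the new arrows but also the Jacobian relations to restrict correctly to $C$; neither is carried through. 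Even granting that $e_C\Lambda e_C$ is cluster-tilted (which the paper's argument shows), it does not follow automatically that it is the relation extension of $e_CBe_C$ rather than of some other tilted algebra, so the specific bimodule isomorphism you need is genuinely stronger than the proposition and would require substantially more work than the two-line proof above.
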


\section{Notation And Preliminaries}
We set the notation for the remainder of this paper. All algebras are assumed to be finite dimensional over an algebraically closed field $K$.  If $\Lambda$ is a $K$-algebra then denote by $\mathop{\text{mod}}\Lambda$ the category of finitely generated right $\Lambda$-modules and by $\mathop{\text{ind}}\Lambda$ a set of representatives of each isomorphism class of indecomposable right $\Lambda$-modules.  Given $M\in\mathop{\text{mod}}\Lambda$, the projective dimension of $M$ is denoted $\pd_{\Lambda}M$ and the injective dimension by $\id_{\Lambda}M$.  We let $\mathop{\text{gl.dim}}\Lambda$ stand for the global dimension of an algebra $\Lambda$
\par
Let $\Lambda=KQ/I$ where $K$ is an algebraically closed field, $Q$ is a finite quiver, $KQ$ the path algebra, and $I$ an ideal.  We say a subquiver $C$ is $\it{full}$ if for any two vertices $v$ and $w$ of $C$, all the arrows in $Q$ with origin $v$ and terminus $w$ are also arrows in $C$.  We say a full subquiver $C$ of $Q$ is $\it{convex}$ if for any two vertices $v,w$ in $C$ and for any path $p$ from $v$ to $w$, every vertex occurring in $p$ is in $C$.  If $C$ is a full subquiver of $Q$, let $e_C$ be the idempotent associated to the sum of the vertices in $C$ and $e'_C$ be the idempotent associated to the sum of the vertices not in $C$.  Then $e_C$ and $e'_C$ are orthogonal idempotents in $KQ$ and $1=e_C+e'_C$.  We abuse notation and view $e_C$ and $e'_C$ as idempotents in both $KQ$ and $\Lambda$.  In general, $\Lambda/<e'_C>$ and $e_C\Lambda e_C$ are not isomorphic.  When $C$ is convex, we do in fact have an isomorphism.
\begin{prop} $\emph{\cite[Proposition~3.3(d)]{GM}}$ 
\label{crux}
$C$ be a full subquiver of $Q$ and $\Lambda=KQ/I$.  Let $e_C$ be the idempotent associated to the sum of the vertices in $C$ and $e'_C$ be the idempotent associated to the sum of the vertices not in $C$.  If $C$ is convex, then $\Lambda/<e'_C>$ is isomorphic to $e_C\Lambda e_C$.
\end{prop}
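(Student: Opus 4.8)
The plan is to write down one explicit unital $K$-algebra map and invoke the first isomorphism theorem, rather than to compare quiver-with-relations presentations. Define $\varphi\colon\Lambda\to e_C\Lambda e_C$ by $\varphi(\lambda)=e_C\lambda e_C$. This map is $K$-linear by construction, it is surjective because by definition every element of $e_C\Lambda e_C$ already has the form $e_C\lambda e_C$, and it sends $1_\Lambda=e_C+e'_C$ to $e_C$, which is the identity element of $e_C\Lambda e_C$. So the substance of the proof is contained in exactly two claims: that $\varphi$ is multiplicative, and that $\ker\varphi=\langle e'_C\rangle$. Convexity is used in, and only in, the proof of multiplicativity.

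First I would establish multiplicativity. Since $\varphi(\lambda)\varphi(\mu)=e_C\lambda e_Ce_C\mu e_C=e_C\lambda e_C\mu e_C$, one computes $\varphi(\lambda\mu)-\varphi(\lambda)\varphi(\mu)=e_C\lambda(1-e_C)\mu e_C=e_C\lambda e'_C\mu e_C$, so it suffices to show $e_C\lambda e'_C\mu e_C=0$ for all $\lambda,\mu\in\Lambda$. I would verify this already in $KQ$, on arbitrary lifts of $\lambda$ and $\mu$: expanding the lifts as $K$-linear combinations of paths reduces the question to showing $e_C\,p\,e'_C\,q\,e_C=0$ for all paths $p,q$ in $Q$. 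Such an expression is either $0$ or the composite path $pq$, and it is nonzero only when $p$ and $q$ are composable through a vertex $v\notin C$ while the two outer endpoints of $pq$ lie in $C$; but then $pq$ is a path in $Q$ joining two vertices of $C$ and passing through $v\notin C$, contradicting the convexity of $C$. Hence $e_C\lambda e'_C\mu e_C=0$ in $KQ$, a fortiori in $\Lambda$, and $\varphi$ is an algebra homomorphism.

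Next I would compute the kernel, which is now routine. For $\langle e'_C\rangle=\Lambda e'_C\Lambda\subseteq\ker\varphi$, use multiplicativity together with $\varphi(e'_C)=e_Ce'_Ce_C=0$ (orthogonality of $e_C,e'_C$): $\varphi(\lambda e'_C\mu)=\varphi(\lambda)\varphi(e'_C)\varphi(\mu)=0$. Conversely, if $e_C\lambda e_C=0$, then expanding $\lambda=(e_C+e'_C)\lambda(e_C+e'_C)$ gives $\lambda=e_C\lambda e'_C+e'_C\lambda e_C+e'_C\lambda e'_C$, and each surviving summand carries a factor $e'_C$, hence lies in $\Lambda e'_C\Lambda=\langle e'_C\rangle$. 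Thus $\ker\varphi=\langle e'_C\rangle$, and $\varphi$ descends to an isomorphism $\Lambda/\langle e'_C\rangle\cong e_C\Lambda e_C$.

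The only genuinely delicate step is the vanishing of $e_C\lambda e'_C\mu e_C$, and this is exactly where convexity is indispensable: without it a path may leave $C$ and re-enter, so $\varphi$ need not even be multiplicative. An alternative route would identify both sides with $KQ_C/J$, where $Q_C$ is the full subquiver of $Q$ on the vertices of $C$ — using $\Lambda/\langle e'_C\rangle\cong KQ_C/\pi(I)$ for the canonical projection $\pi\colon KQ\to KQ_C$, and $e_C\Lambda e_C\cong KQ_C/(I\cap KQ_C)$ — but verifying $\pi(I)=I\cap KQ_C$ would invoke convexity at precisely the same point, namely in showing that $\pi$ carries the two-sided ideal $I$ into itself.
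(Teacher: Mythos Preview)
Your argument is correct. The map $\varphi(\lambda)=e_C\lambda e_C$ is $K$-linear, unital, and surjective by inspection; your computation $\varphi(\lambda\mu)-\varphi(\lambda)\varphi(\mu)=e_C\lambda e'_C\mu e_C$ is right, and the path-by-path verification that this vanishes in $KQ$ is exactly where convexity enters and is cleanly handled. The kernel computation is standard once multiplicativity is known.

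Note, however, that the paper does not supply its own proof of this proposition: it is quoted verbatim from Green and Marcos \cite[Proposition~3.3(d)]{GM} and used as a black box. So there is nothing in the paper to compare your argument against. Your direct first-isomorphism-theorem approach is self-contained and arguably the most transparent route; the alternative you sketch at the end (matching the two presentations $KQ_C/\pi(I)$ and $KQ_C/(I\cap KQ_C)$) is closer in spirit to how such statements are often proved in the quiver-with-relations literature, and as you correctly observe it pivots on convexity at the identical point.
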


\section{Main Results}
\subsection{Full Subcategories}
Since its introduction by Happel and Ringel in the early eighties, $\cite{HR}$, the class of tilted algebras have been extensively studied in the representation theory of finite dimensional algebras.  It was natural to consider generalizations of this notion.  Thus, over the years, the following classes of algebras were defined and studied: the quasi-tilted algebras $\cite{HRS}$, the shod algebras $\cite{CL,RS}$, the weakly shod algebras $\cite{CL2}$, the left and right glued algebras $\cite{AC}$, and the laura algebras $\cite{AC2,RS2}$.  Let $\Lambda$ belong to one of the aforementioned classes.  A nice feature of these classes of algebras is, given an idempotent $e\in\Lambda$, then the endomorphism algebra $e\Lambda e$ also belongs to the same class.
\begin{theorem}$\emph{\cite{H,AC3}}$ 
\label{subcat}
Let $\Lambda$ be an algebra and $e\in\Lambda$ an idempotent.
\begin{itemize}
\item[\emph{(a)}] If $\Lambda$ is laura, so is $e\Lambda e$.
\item[\emph{(b)}] If $\Lambda$ is left (or right) glued, so is $e\Lambda e$.
\item[\emph{(c)}] If $\Lambda$ is weakly shod, so is $e\Lambda e$.
\item[\emph{(d)}] If $\Lambda$ is shod, so is $e\Lambda e$.
\item[\emph{(e)}] If $\Lambda$ is quasi-tilted, so is $e\Lambda e$.
\item[\emph{(f)}] If $\Lambda$ is tilted, so is $e\Lambda e$.
\end{itemize}
\end{theorem}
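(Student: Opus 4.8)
The plan is to handle the six classes uniformly rather than one by one, using that each is singled out inside $\mathop{\text{mod}}\Lambda$ by a condition on $\ind\Lambda$ expressed through the \emph{left part} $\mathcal{L}_\Lambda$ (the indecomposables all of whose predecessors have projective dimension $\le 1$), the \emph{right part} $\mathcal{R}_\Lambda$ (dually, with injective dimension), a bound on $\mathop{\text{gl.dim}}\Lambda$, and --- in the tilted case --- the existence of a sincere directing module. Explicitly: $\Lambda$ is laura iff $\ind\Lambda\setminus(\mathcal{L}_\Lambda\cup\mathcal{R}_\Lambda)$ is finite; left, respectively right, glued adds that $\ind\Lambda\setminus\mathcal{L}_\Lambda$, respectively $\ind\Lambda\setminus\mathcal{R}_\Lambda$, is finite; weakly shod adds that this exceptional set is moreover acyclic; $\Lambda$ is shod iff every indecomposable has projective or injective dimension $\le 1$ (whence $\mathop{\text{gl.dim}}\Lambda\le 3$); $\Lambda$ is quasi-tilted iff $\mathop{\text{gl.dim}}\Lambda\le 2$ and $\ind\Lambda=\mathcal{L}_\Lambda\cup\mathcal{R}_\Lambda$; and $\Lambda$ is tilted iff it is quasi-tilted and carries a complete slice, equivalently (Bongartz) a sincere directing module. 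Since tilted $\subseteq$ quasi-tilted $\subseteq$ shod $\subseteq$ weakly shod $\subseteq$ laura, and left and right glued also lie inside laura, I would prove the inheritance of these conditions by $e\Lambda e$ from the top down and read off the lower cases as specializations.

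The one functor that behaves well enough to work with is the exact restriction $(-)e=\Hom_\Lambda(e\Lambda,-)\colon\mathop{\text{mod}}\Lambda\to\mathop{\text{mod}}e\Lambda e$, exact because $e\Lambda$ is a projective $\Lambda$-module; its left adjoint is $-\otimes_{e\Lambda e}e\Lambda$, and from $(N\otimes_{e\Lambda e}e\Lambda)e\cong N$ one obtains that every indecomposable $e\Lambda e$-module is a direct summand of $Me$ for some $M\in\ind\Lambda$. Sincerity transports at once, since $Me\cdot e_i=Me_i\ne 0$ for each vertex $i$ in the support of $e$ when $M$ is sincere over $\Lambda$. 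The genuine difficulty --- and the reason the statement is not formal --- is that $(-)e$ is neither full nor does it preserve projective or injective modules: its left adjoint $-\otimes_{e\Lambda e}e\Lambda$ need not be exact, $e\Lambda$ being only rarely projective as a left $e\Lambda e$-module, so the comparisons of $\pd_\Lambda M$ with $\pd_{e\Lambda e}(Me)$, of $\id_\Lambda M$ with $\id_{e\Lambda e}(Me)$, and for $\mathop{\text{gl.dim}}$, along with the transfer of path and cycle relations from $\ind e\Lambda e$ back to $\ind\Lambda$, all require work. This is exactly what the cited results supply: exploiting the internal structure of each class --- for instance that a module of projective dimension $\le 1$ lying along a predecessor chain has its projective presentation controlled tightly enough that restriction preserves $\pd\le 1$ --- one shows that $(-)e$ carries $\mathcal{L}_\Lambda$ into $\mathcal{L}_{e\Lambda e}$ and $\mathcal{R}_\Lambda$ into $\mathcal{R}_{e\Lambda e}$, that an indecomposable $e\Lambda e$-module outside $\mathcal{L}_{e\Lambda e}\cup\mathcal{R}_{e\Lambda e}$ is a summand of some $Me$ with $M\notin\mathcal{L}_\Lambda\cup\mathcal{R}_\Lambda$, and that the global-dimension and finiteness bounds persist; feeding this into the characterizations gives (a), and then (b), (c), (d), (e).

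I expect (f) to be the main obstacle, because ``tilted'' is the most rigid of the conditions and transports for no formal reason; in particular the tempting shortcut ``$\Lambda=\End_H(T)$, hence $e\Lambda e=\End_H(T')$ for a summand $T'$ of the tilting module $T$'' is circular, since recognising $\End_H(T')$ as tilted is itself an instance of (f). I would instead go through Bongartz's criterion: from a sincere directing $\Lambda$-module $M$, produce a sincere directing $e\Lambda e$-module among the indecomposable summands of $Me$; this requires showing that sincerity can be concentrated in one such summand and that directedness --- the absence of a cyclic path through it in $\ind e\Lambda e$ --- is inherited, and for the latter one again uses the control of paths under $(-)e$ from the previous step. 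Alternatively, restrict a complete slice $\mathcal{S}$ of $\ind\Lambda$ to $\mathcal{S}e$ and verify the slice axioms for $e\Lambda e$ directly. All six assertions are established in \cite{H} and \cite{AC3}, so in the paper one simply invokes them; the outline above is meant only to display the mechanism, and to single out the non-formal homological transport of $\mathcal{L}_\Lambda$ and $\mathcal{R}_\Lambda$ behind (a)--(e), and the directing-module or complete-slice argument behind (f), as the steps that carry the weight.
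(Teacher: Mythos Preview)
The paper gives no proof of this theorem: it is stated with the citation \cite{H,AC3} and immediately used. You recognise this yourself in your final paragraph, so in that sense your proposal matches the paper exactly --- there is nothing to compare against.

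Your extended sketch of the mechanism behind the cited results is reasonable as an outline, but a few of the characterizations are not quite on target. Weakly shod is not ``laura plus acyclicity of the exceptional set''; the defining condition is a bound on the length of paths from indecomposable injectives to indecomposable projectives, and the resulting structure theory (existence of a faithful pip-bounded component) is what one actually transports. Likewise, left (respectively right) glued is not literally ``$\ind\Lambda\setminus\mathcal{L}_\Lambda$ finite'' but rather cofiniteness of $\mathcal{R}_\Lambda$ (respectively $\mathcal{L}_\Lambda$); the two are close but not identical. For (f), the route taken in the literature is closer to your ``alternative'' suggestion: one works with a complete slice and shows that its restriction under $(-)e$ remains a complete slice for $e\Lambda e$, rather than going through Bongartz's sincere-directing criterion. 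None of this is a gap in the context of the present paper, since the statement is simply quoted; but if you were to write out the argument, these are the places where your sketch would need sharpening.
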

We can combine Proposition $\ref{crux}$ and Theorem $\ref{subcat}$ to obtain our first main result.
\begin{theorem}
\label{main1}
Let $\Lambda=KQ/I$ and $C$ a full convex subquiver of $Q$.  Let $e_C$ be the idempotent associated to the sum of the vertices in $C$ and $e'_C$ be the idempotent associated to the sum of the vertices not in $C$.
\begin{itemize}
\item[\emph{(a)}] If $\Lambda$ is laura, so is $\Lambda/<e'_C>$.
\item[\emph{(b)}] If $\Lambda$ is left (or right) glued, so is $\Lambda/<e'_C>$.
\item[\emph{(c)}] If $\Lambda$ is weakly shod, so is $\Lambda/<e'_C>$.
\item[\emph{(d)}] If $\Lambda$ is shod, so is $\Lambda/<e'_C>$.
\item[\emph{(e)}] If $\Lambda$ is quasi-tilted, so is $\Lambda/<e'_C>$.
\item[\emph{(f)}] If $\Lambda$ is tilted, so is $\Lambda/<e'_C>$.
\end{itemize}
\end{theorem}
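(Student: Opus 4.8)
The plan is to combine the two results the paper has already assembled, namely Proposition~\ref{crux} and Theorem~\ref{subcat}, and observe that the desired conclusion is essentially immediate. The key point is that Proposition~\ref{crux} identifies $\Lambda/\langle e'_C\rangle$ with the endomorphism algebra $e_C\Lambda e_C$ as $K$-algebras, because $C$ is assumed to be a full \emph{convex} subquiver. All six classes listed --- laura, left/right glued, weakly shod, shod, quasi-tilted, tilted --- are defined purely in terms of the module category (projective/injective dimensions, existence of certain components in the Auslander--Reiten quiver, sections, etc.), hence are invariant under $K$-algebra isomorphism.

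First I would invoke Proposition~\ref{crux} to get a $K$-algebra isomorphism $\Lambda/\langle e'_C\rangle \cong e_C\Lambda e_C$, using that $C$ is convex. Then, setting $e = e_C$ in Theorem~\ref{subcat}, each clause (a)--(f) tells us that if $\Lambda$ has the relevant property, then so does $e_C\Lambda e_C$. Finally, transporting the property across the isomorphism of Proposition~\ref{crux} yields the same property for $\Lambda/\langle e'_C\rangle$. I would simply run this argument once and remark that it applies verbatim to each of the six classes, since each is closed under isomorphism of algebras.

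There is essentially no obstacle here: the substantive content lies entirely in the two cited results, Proposition~\ref{crux} (from \cite{GM}) and Theorem~\ref{subcat} (from \cite{H,AC3}), and the present theorem is their formal combination. The only thing worth a sentence of care is noting explicitly that membership in each of these classes is preserved by $K$-algebra isomorphism --- which is clear since all the defining conditions are stated in terms of $\mathop{\text{mod}}\Lambda$ and numerical invariants of modules --- so that the isomorphism $\Lambda/\langle e'_C\rangle \cong e_C\Lambda e_C$ may be used to pull the conclusion of Theorem~\ref{subcat} back to the factor algebra. Thus the proof reduces to: apply Theorem~\ref{subcat} to the idempotent $e_C$, then apply Proposition~\ref{crux}.
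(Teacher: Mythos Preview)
Your proposal is correct and matches the paper's own proof essentially verbatim: apply Theorem~\ref{subcat} to conclude that $e_C\Lambda e_C$ lies in the same class as $\Lambda$, then use Proposition~\ref{crux} to identify $e_C\Lambda e_C$ with $\Lambda/\langle e'_C\rangle$. The paper's proof is two sentences long and does exactly this.
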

\begin{proof}
Consider $e_C\Lambda e_C$.  Theorem $\ref{subcat}$ guarantees $e_C\Lambda e_C$ will be the same class type as $\Lambda$.  Now consider $\Lambda/<e'_C>$ and apply Proposition $\ref{crux}$.
\end{proof}
\begin{remark}
Since $e_C$ corresponds to a convex sub quiver $C$ of $Q$, the global dimension of $e_C\Lambda e_C$ does not exceed that of $\Lambda$.  Let $B=e_C\Lambda e_C$.  It is well known that, in this case, for any two $B$-modules $X$, $Y$, we have $\text{Ext}_B^i(X,Y)\cong\text{Ext}_{\Lambda}^i(X,Y)$ for all $i$.  Hence, for any simple $B$-module $S$, we have $\pd_BS\leq\pd_{\Lambda}S$.  This yields the statement.  Combining with Theorem $\ref{main1}$, we have $\text{gl.dim}(\Lambda)\geq\text{gl.dim}(\Lambda/<e'_C>)$.
\end{remark}
\begin{remark}
In general, $\Lambda/<e>$ is not guaranteed to be of the same class as $\Lambda$ for an arbitrary idempotent $e\in\Lambda$.  The following example is taken from $\cite{BMR2}$.  Consider the path algebra of the following quiver 
\[
\begin{tikzcd}[row sep=5ex]
&& 3\arrow[dr,"\gamma"] \\
1 \arrow[r,"\alpha"] &2 \arrow[ur,"\beta"] \arrow[dr,"\delta"]&&5 \\
&& 4\arrow[ur,"\epsilon"]
\end{tikzcd}
\]
with relations $\alpha\beta=\beta\gamma-\delta\epsilon=0$.  This is a tilted algebra.  Let $e_4$ be the primitive idempotent corresponding to vertex $4$.  Then $\Lambda/<e_4>$ is not tilted.  Notice, $e=e_1+e_2+e_3+e_5$ is not convex.
\end{remark}
\subsection{Simply And Strongly Simply Connected Algebras}
An algebra $\Lambda$ is called $\it{simply~connected}$ provided its ordinary quiver $Q$ has no oriented cycles and, for any presentation of $\Lambda$ as a bound quiver algebra, $\Lambda\cong KQ/I$, the fundamental group $\pi_1(Q,I)$ is trivial $\cite{AS}$.  It is natural to ask if simply connected algebras are closed under taking full, convex subcategories, thus allowing an application of Proposition $\ref{crux}$.  This is not the case.  Consider $\Lambda=KQ/I$, where $Q$ is the quiver 
\[
\begin{tikzcd}[row sep=5ex]
&& 3\arrow[dr,"\gamma"] \\
1 \arrow[r,"\alpha"] &2 \arrow[ur,"\beta"] \arrow[dr,"\delta"]&&5 \\
&& 4\arrow[ur,"\epsilon"]
\end{tikzcd}
\]
with $I$ generated by $\alpha\beta\gamma-\alpha\delta\epsilon$.  Then $\pi_1(Q,I)=0$ for every presentation of $\Lambda$, so that $\Lambda$ is simply connected.  However, the full convex subcategory $e=e_2+e_3+e_4+e_5$ is hereditary with fundamental group $\mathbb{Z}$, thus not simply connected.  
\par
To apply Proposition $\ref{crux}$, we turn to one particular subclass of simply connected algebras.  We say an algebra $\Lambda$ is $\it{strongly~simply~connected}$ if every full, convex subcategory is simply connected $\cite{S}$.  Thus, the following result is immediate.
\begin{theorem}
\label{main2}
Let $\Lambda=KQ/I$ be a strongly simply connected algebra and $C$ a full convex subquiver of $Q$.  Let $e_C$ be the idempotent associated to the sum of the vertices in $C$ and $e'_C$ be the idempotent associated to the sum of the vertices not in $C$.  Then $\Lambda/<e'_C>$ is simply connected.
\end{theorem}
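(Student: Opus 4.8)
The plan is to combine the defining property of strongly simply connected algebras with the isomorphism supplied by Proposition~\ref{crux}. First I would observe that, by definition, $\Lambda$ strongly simply connected means that \emph{every} full convex subcategory of $\Lambda$ is simply connected. Since $C$ is a full convex subquiver of $Q$, the associated endomorphism algebra $e_C\Lambda e_C$ is precisely such a full convex subcategory, so it is simply connected.

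Next I would invoke Proposition~\ref{crux}: because $C$ is convex, there is an algebra isomorphism $\Lambda/\langle e'_C\rangle \cong e_C\Lambda e_C$. Simple connectedness is an invariant of the isomorphism class of an algebra (it depends only on the ordinary quiver having no oriented cycles and on the fundamental group $\pi_1(Q,I)$ being trivial for every presentation, both of which are preserved by algebra isomorphism). Hence $\Lambda/\langle e'_C\rangle$ is simply connected, which is the claim.

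There is essentially no obstacle here: the work has already been done in setting up the notion of strongly simply connected algebra (so that full convex subcategories are automatically simply connected) and in Proposition~\ref{crux} (identifying $\Lambda/\langle e'_C\rangle$ with the corresponding full convex subcategory). The only point that deserves a word is the transfer of simple connectedness across the isomorphism of Proposition~\ref{crux}, and that one might briefly note that $e_C\Lambda e_C$, being a full convex subcategory, inherits the property directly from the strong simple connectedness hypothesis. I would therefore present this as a short proof: identify $e_C\Lambda e_C$ as a full convex subcategory, conclude it is simply connected by hypothesis, and then pull the conclusion back to $\Lambda/\langle e'_C\rangle$ via Proposition~\ref{crux}.
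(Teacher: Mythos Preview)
Your proposal is correct and matches the paper's approach exactly: the paper treats the result as immediate from the definition of strongly simply connected (every full convex subcategory is simply connected) together with Proposition~\ref{crux}, which is precisely the argument you give.
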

It is known that a representation-finite algebra $\Lambda$ is simply connected if and only if $\Lambda$ is strongly simply connected $\cite{BG}$.  This leads to the following corollary.
\begin{cor}
\label{cor1}
Let $\Lambda=KQ/I$ be representation finite and $C$ a full convex subquiver of $Q$.  Let $e_C$ be the idempotent associated to the sum of the vertices in $C$ and $e'_C$ be the idempotent associated to the sum of the vertices not in $C$.
\begin{itemize}
\item[\emph{(a)}] If $\Lambda$ is strongly simply connected, then $\Lambda/<e'_C>$ is strongly simply connected.
\item[\emph{(b)}] If $\Lambda$ is simply connected, then $\Lambda/<e'_C>$ is strongly simply connected.
\end{itemize}
\end{cor}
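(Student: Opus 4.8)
The plan is to combine Theorem~\ref{main2}, the theorem of Bongartz and Gabriel \cite{BG} (a representation-finite algebra is simply connected if and only if it is strongly simply connected), and the elementary observation that representation-finiteness passes to quotient algebras. I do not anticipate a real obstacle: the argument is essentially a formal chaining of these facts, and the only point needing care is the representation-finiteness of the quotient.

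First I would record that $\Lambda/<e'_C>$ is again representation-finite. Indeed, the indecomposable $\Lambda/<e'_C>$-modules are precisely the indecomposable $\Lambda$-modules $M$ with $Me'_C=0$; since $\ind\Lambda$ is finite, so is $\ind(\Lambda/<e'_C>)$. (Equivalently, one may invoke Proposition~\ref{crux} and the fully faithful exact embedding $\mathop{\text{mod}} e_C\Lambda e_C\hookrightarrow\mathop{\text{mod}}\Lambda$ afforded by the convexity of $C$.) This is the only place where the representation-finiteness hypothesis is used, and it is immediate.

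For part (a), assume $\Lambda$ is strongly simply connected. By Theorem~\ref{main2}, $\Lambda/<e'_C>$ is simply connected, and by the previous paragraph it is representation-finite; hence \cite{BG} upgrades this to $\Lambda/<e'_C>$ being strongly simply connected. For part (b), assume $\Lambda$ is simply connected. Since $\Lambda$ is representation-finite, \cite{BG} shows $\Lambda$ is in fact strongly simply connected, so we are reduced to the hypotheses of part (a); applying (a) gives that $\Lambda/<e'_C>$ is strongly simply connected. The mild subtlety worth spelling out explicitly — and the step I would be most careful about — is the claim that a quotient of a representation-finite algebra is representation-finite; everything else is the formal combination just described.
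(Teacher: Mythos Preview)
Your proof is correct and follows essentially the same route as the paper: apply Theorem~\ref{main2} to get simple connectedness of the quotient, pass representation-finiteness to the quotient, and then invoke \cite{BG} to upgrade to strong simple connectedness (and for (b), first use \cite{BG} on $\Lambda$ to reduce to (a)). The only difference is that you spell out why $\Lambda/\langle e'_C\rangle$ is representation-finite, whereas the paper simply asserts it.
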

\begin{proof}
Assume $\Lambda$ is strongly simply connected.  Theorem $\ref{main2}$ says $\Lambda/<e'_C>$ is simply connected.  Since $\Lambda$ is representation-finite, so is $\Lambda/<e'_C>$.  This further implies $\Lambda/<e'_C>$ is strongly simply connected.  If $\Lambda$ is simply connected, then $\Lambda$ is strongly simply connected since we assumed $\Lambda$ is representation-finite.  The second statement now follows. 
\end{proof}
Let $\Lambda$ be an algebra of finite type and $M_1,M_2,\cdots,M_n$ be a complete set of representatives of the isomorphism classes of indecomposable $\Lambda$-modules.  Then  $A=\text{End}_{\Lambda}(\oplus_{i=1}^nM_i)$ is the $\it{Auslander~algebra}$ of $\Lambda$.  It was shown in $\cite{AB}$ that simply connected and strongly simply connected are equivalent conditions for Auslander algebras.  Thus, the following corollary is immediate.
\begin{cor}
Let $A=KQ/I$ be an Auslander algebra and $C$ a full convex subquiver of $Q$.  Let $e_C$ be the idempotent associated to the sum of the vertices in $C$ and $e'_C$ be the idempotent associated to the sum of the vertices not in $C$.
\begin{itemize}
\item[\emph{(a)}] If $A$ is strongly simply connected, then $A/<e'_C>$ is strongly simply connected.
\item[\emph{(b)}] If $A$ is simply connected, then $A/<e'_C>$ is strongly simply connected.
\end{itemize}
\end{cor}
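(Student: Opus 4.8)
The plan is to obtain this as a formal consequence of Proposition~\ref{crux}, Theorem~\ref{main2}, and the result of \cite{AB}, the only substantive input being that convexity is transitive. First I would record that fact: if $D$ is a full convex subquiver of $C$ and $C$ is a full convex subquiver of $Q$, then $D$ is a full convex subquiver of $Q$. Indeed, any path in $Q$ joining two vertices of $D$ joins two vertices of $C$, hence lies entirely in $C$ by convexity of $C$, and then lies in $D$ by convexity of $D$ in $C$; fullness passes down for the same reason. Since $D\subseteq C$ we have $e_D e_C=e_C e_D=e_D$, so $e_D(e_C A e_C)e_D=e_D A e_D$, and the ordinary quiver of $e_C A e_C\cong A/<e'_C>$ is $C$; thus the full convex subcategories of $e_C A e_C$ are precisely the $e_D A e_D$ with $D$ a full convex subquiver of $Q$ contained in $C$.

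For part (a) I would argue as follows. Assume $A$ is strongly simply connected. By Proposition~\ref{crux} it suffices to show that $e_C A e_C$ is strongly simply connected, that is, that every full convex subcategory of it is simply connected. Such a subcategory is $e_D A e_D$ for some full convex subquiver $D$ of $Q$ (contained in $C$, by the observation above), so $e_D A e_D\cong A/<e'_D>$ is simply connected by Theorem~\ref{main2}. Hence $A/<e'_C>$ is strongly simply connected.

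For part (b), assume $A$ is simply connected. Because $A$ is an Auslander algebra, \cite{AB} tells us that $A$ is already strongly simply connected, and then part (a) applies and gives that $A/<e'_C>$ is strongly simply connected.

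I do not expect a genuine obstacle here; the whole proof is the transitivity of convexity together with the reductions already in place, and the Auslander assumption is used only via \cite{AB} to upgrade ``simply connected'' to ``strongly simply connected'' for $A$. The one point deserving a little care is purely bookkeeping: checking that the full convex subcategories of $e_C A e_C$ are exactly the $e_D A e_D$ with $D$ convex in $Q$, and that Proposition~\ref{crux} is legitimately applied both to $A$ and, if one prefers to iterate it, to $e_C A e_C$. In fact the argument for (a) shows verbatim that every full convex subcategory of a strongly simply connected algebra is again strongly simply connected, so the Auslander hypothesis in (a) is not actually needed.
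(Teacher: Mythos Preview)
Your argument is correct. The paper gives no explicit proof, declaring the corollary ``immediate'' from \cite{AB} after the template of Corollary~\ref{cor1}. That template, however, does not carry over verbatim: in Corollary~\ref{cor1} one uses that representation-finiteness passes to $\Lambda/\langle e'_C\rangle$, so that \cite{BG} can be invoked again at the level of the quotient to upgrade ``simply connected'' to ``strongly simply connected''; the Auslander property does \emph{not} pass to $A/\langle e'_C\rangle$, so \cite{AB} cannot play the same role there. Your approach sidesteps this by proving (a) directly from the definition, via transitivity of convexity: every full convex subcategory of $e_C A e_C$ is $e_D A e_D$ for some $D$ convex in $Q$, hence simply connected because $A$ is strongly simply connected. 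This is cleaner, and as you observe it shows that (a) holds for any strongly simply connected algebra, the Auslander hypothesis being needed only in (b) to invoke \cite{AB} on $A$ itself. The bookkeeping you flag (that the ordinary quiver of $e_C A e_C$ is $C$, so that its full convex subquivers are exactly the $D\subseteq C$ convex in $Q$) is indeed the only point requiring care, and your verification of it is fine.
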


\subsection{Left Supported And Cluster-Tilted Algebras}

Let $\Lambda$ be an algebra.  Given $X,Y\in\mathop{\text{ind}}\Lambda$, we denote $X\leadsto Y$ in case there exists a chain of nonzero nonisomorphisms
\[ 
X=X_0\xrightarrow{f_1}X_1\xrightarrow{f_2}\cdots X_{t-1}\xrightarrow{f_t} X_t=Y
\]
with $t\geq0$, between indecomposable modules.  In this case we say $X$ is a predecessor of $Y$ and $Y$ is a successor of $X$.  We denote by $\mathcal{L}_\Lambda$ the following subcategory of $\mathop{\text{ind}}\Lambda$:
\[
\mathcal{L}_{\Lambda}=\{Y\in\mathop{\text{ind}}\Lambda:\pd_{\Lambda}X\leq1~\text{for each}~X\leadsto Y\}.
\]
We call $\mathcal{L}_{\Lambda}$ the $\it{left~part}$ of the module category $\mathop{\text{mod}}\Lambda$.
It is easy to see that $\mathcal{L}_{\Lambda}$ is closed under predecessors.  The $\it{left~support}$ of $\Lambda$, $\Lambda_{\lambda}$, is the endomorphism algebra of the direct sum of all indecomposable projective $\Lambda$-modules lying in $\mathcal{L}_{\Lambda}$.  It is clear from the definition that $\Lambda_{\lambda}$ is a full convex subcategory of $\Lambda$.  It was shown in $\cite{ACT}$ that $\Lambda_{\lambda}$ is a direct product of quasi-tilted algebras.
\par    
We say an algebra $\Lambda$ is $\it{left~supported}$ provided the class $\mathop{\text{add}}\mathcal{L}_{\Lambda}$ is contravariantly finite in $\mathop{\text{mod}}\Lambda$ $\cite{ACT}$.  If $\Lambda$ is left supported, it was also shown in $\cite{ACT}$ that $\Lambda_{\lambda}$ is a direct product of tilted algebras.
\begin{theorem}  
\label{main3}
Let $\Lambda=KQ/I$ be an algebra and $\Lambda_{\lambda}$ the left support. Let $e'$ be the direct sum of the idempotents not in $\Lambda_{\lambda}$.
\begin{itemize}
\item[\emph{(a)}] $\Lambda/<e'>$ is a direct product of quasi-tilted algebras.
\item[\emph{(b)}] If $\Lambda$ is left supported, then $\Lambda/<e'>$ is a direct product of tilted algebras.
\end{itemize}
\end{theorem}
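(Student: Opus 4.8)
The plan is to recognize the left support $\Lambda_\lambda$ as an idempotent subalgebra of $\Lambda$ of exactly the shape to which Proposition~\ref{crux} applies, and then quote the results of \cite{ACT} recalled just above the statement. First I would fix notation: let $C$ be the full subquiver of $Q$ whose vertices are those $i$ for which the indecomposable projective $e_i\Lambda$ lies in $\mathcal{L}_{\Lambda}$. Then $e_C=\sum_{i\in C}e_i$ is precisely the idempotent for which $e_C\Lambda$ is the direct sum of the indecomposable projectives belonging to $\mathcal{L}_{\Lambda}$, so by the very definition of the left support we have $\Lambda_\lambda=\End_{\Lambda}(e_C\Lambda)\cong e_C\Lambda e_C$, while $e'=e'_C$ is the sum of the remaining primitive idempotents. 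The text already records that $\Lambda_\lambda$ is a full convex subcategory of $\Lambda$; equivalently, $C$ is a full convex subquiver of $Q$.

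Next, since $C$ is convex, Proposition~\ref{crux} supplies an isomorphism $\Lambda/<e'>=\Lambda/<e'_C>\cong e_C\Lambda e_C\cong\Lambda_\lambda$. Everything now transports along this isomorphism.

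For part (a), I would invoke the theorem of \cite{ACT} that $\Lambda_\lambda$ is a direct product of quasi-tilted algebras; carrying this across the isomorphism above shows $\Lambda/<e'>$ is a direct product of quasi-tilted algebras. For part (b), under the additional hypothesis that $\Lambda$ is left supported, \cite{ACT} strengthens this conclusion to say that $\Lambda_\lambda$ is a direct product of tilted algebras, and the same isomorphism yields that $\Lambda/<e'>$ is a direct product of tilted algebras.

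The only point demanding care — and it is the ``main obstacle'', though it is more bookkeeping than genuine difficulty — is verifying that the idempotent $e'$ appearing in the statement really is the complementary idempotent $e'_C$ of a full convex subquiver $C$, so that Proposition~\ref{crux} is legitimately applicable. This reduces to two facts already available: the left support $\Lambda_\lambda$ is a full convex subcategory of $\Lambda$ (noted in the text, following \cite{ACT}), and $\Lambda_\lambda$ is identified with $e_C\Lambda e_C$ through the description $e_C\Lambda=\bigoplus_{e_i\Lambda\in\mathcal{L}_{\Lambda}}e_i\Lambda$ of the relevant projective. Once these are in place the argument is immediate.
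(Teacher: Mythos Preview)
Your proposal is correct and follows exactly the same line as the paper: use that $\Lambda_\lambda$ is a full convex subcategory to invoke Proposition~\ref{crux}, obtain $\Lambda/\langle e'\rangle\cong\Lambda_\lambda$, and then quote the results of \cite{ACT} recalled in the preceding discussion. The paper's proof is a single sentence; you have simply unpacked the bookkeeping identifying $e'$ with $e'_C$ for the relevant convex subquiver $C$.
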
     
\begin{proof}
Since $\Lambda_{\lambda}$ is a full convex subcategory of $\Lambda$, we may apply Proposition $\ref{crux}$ and the previous discussion to obtain our result.
\end{proof}
Cluster-tilted algebras are the endomorphism algebras of cluster-tilting objects over cluster categories of hereditary algebras, introduced by Buan, Marsh and Reiten in $\cite{BMR1}$.  A striking property shown in $\cite{BMR2}$ is that factoring out the two-sided ideal generated by an idempotent results in another cluster-tilted algebra.  Thus, we have the following corollary.
\begin{cor}
\label{DS}
Let $\Lambda=KQ/I$ be cluster-tilted and $\Lambda_{\lambda}$ the left support.  Let $e'$ be the direct sum of the idempotents not in $\Lambda_{\lambda}$.  Then $\Lambda/<e'>$ is a direct product of hereditary algebras.
\end{cor}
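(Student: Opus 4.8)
The plan is to combine Theorem~\ref{main3}(a) with the idempotent-reduction property of cluster-tilted algebras recorded just before the statement. First, since $\Lambda$ is cluster-tilted and $e'$ is an idempotent of $\Lambda$, factoring out $<e'>$ again yields a cluster-tilted algebra by \cite{BMR2}. On the other hand, Theorem~\ref{main3}(a) says that $\Lambda/<e'>$ is a direct product of quasi-tilted algebras. It therefore suffices to prove the purely algebraic statement: an algebra that is simultaneously cluster-tilted and a direct product of quasi-tilted algebras must be a direct product of hereditary algebras.

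Next I would pass to connected components. Write $\Lambda/<e'>\cong B_1\times\cdots\times B_r$ with each $B_i$ connected. Each $B_i$ is quasi-tilted, being a connected direct factor of a product of quasi-tilted algebras, and each $B_i$ is cluster-tilted, since the corresponding cluster-tilting object over the cluster category decomposes along the block decomposition and a connected component of a cluster-tilted algebra is cluster-tilted. So it is enough to treat a single connected algebra $B$ which is both quasi-tilted and cluster-tilted.

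The decisive step is then homological. A quasi-tilted algebra has $\mathop{\text{gl.dim}}B\le 2$, in particular finite global dimension. By the structure theory of cluster-tilted algebras (they are Gorenstein of Gorenstein dimension at most one, and one with finite global dimension is hereditary), $B$ must be hereditary. Reassembling the components, $\Lambda/<e'>$ is a direct product of hereditary algebras, which is the assertion.

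The main obstacle I anticipate is precisely this last step: the input that a cluster-tilted algebra of finite global dimension (equivalently, a quasi-tilted cluster-tilted algebra) is hereditary is not formal and rests on the known homological description of cluster-tilted algebras; I would cite it rather than reprove it. The reduction to connected components is routine, but the claim that a connected component of a cluster-tilted algebra is again cluster-tilted should be stated explicitly, as it is what licenses applying the hereditariness criterion block by block.
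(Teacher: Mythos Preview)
Your proof is correct and follows essentially the same route as the paper: invoke \cite{BMR2} to see that $\Lambda/\langle e'\rangle$ is cluster-tilted, use Theorem~\ref{main3}(a) to see it is a direct product of quasi-tilted algebras, and then finish with the homological dichotomy for cluster-tilted algebras. The paper's version is slightly more direct: rather than decomposing into connected components and invoking the Gorenstein description, it simply cites the well-known fact that a cluster-tilted algebra has global dimension $1$ or $\infty$, so combining this with $\mathop{\text{gl.dim}}\le 2$ from quasi-tiltedness forces the quotient to be hereditary globally, and your detour (and the accompanying worry about whether connected components of cluster-tilted algebras are cluster-tilted) becomes unnecessary.
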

\begin{proof}
By $\cite{BMR2}$, we know $\Lambda/<e'>$ is cluster-tilted.  We also know from Theorem $\ref{main3}$ (a) that $\Lambda/<e'>$ is a direct product of quasi-tilted algebras.  It is well known that cluster-tilted algebras have global dimension equal to $1$ or $\infty$.  Since the global dimension of any quasi-tilted algebra is less than or equal to $2$, we must have $\Lambda/<e'>$ is a direct product of hereditary algebras.
\end{proof}
Corollary $\ref{DS}$ gives us a new proof of a result due to D. Smith  which says the left support of a cluster-tilted algebra is a direct product of hereditary algebras $\cite{SM}$.
\begin{prop}
\label{smith}
Let $\Lambda=KQ/I$ be cluster-tilted and $\Lambda_{\lambda}$ the left support.  Then $\Lambda_{\lambda}$ is a direct product of hereditary algebras. 
\end{prop}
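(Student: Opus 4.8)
The plan is to deduce Proposition~\ref{smith} directly from Corollary~\ref{DS}. The key observation is that the left support is, by its very construction, an endomorphism algebra of the form $e\Lambda e$: if $e$ denotes the sum of the primitive idempotents $e_i$ for which the indecomposable projective $P_i$ lies in $\mathcal{L}_{\Lambda}$, then $\Lambda_{\lambda}=\End_{\Lambda}(\bigoplus_i P_i)\cong e\Lambda e$, and $e=1-e'$ for exactly the idempotent $e'$ appearing in the statement of Theorem~\ref{main3} and Corollary~\ref{DS}. As already recorded in the text, $\Lambda_{\lambda}$ is a full convex subcategory of $\Lambda$, so Proposition~\ref{crux} applies and gives an isomorphism $\Lambda_{\lambda}\cong e\Lambda e\cong\Lambda/<e'>$.

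With this identification in hand the proof is immediate: Corollary~\ref{DS} asserts precisely that $\Lambda/<e'>$ is a direct product of hereditary algebras when $\Lambda$ is cluster-tilted, hence so is $\Lambda_{\lambda}$. For completeness I would also spell out the self-contained form of the argument — the one that is in effect hidden inside Corollary~\ref{DS} — applied directly to $\Lambda_{\lambda}$: it is a direct product of quasi-tilted algebras by \cite{ACT}, hence of global dimension at most $2$; being a full convex subcategory of the cluster-tilted algebra $\Lambda$ it is itself cluster-tilted by \cite{BMR2}, so each of its connected factors has global dimension $1$ or $\infty$; the only value compatible with $\text{gl.dim}\leq 2$ is $1$, so every factor is hereditary.

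There is essentially no analytic obstacle here: the proposition is a restatement of Corollary~\ref{DS} transported through Proposition~\ref{crux}, and the reason for isolating it is that it recovers D.\ Smith's theorem \cite{SM}. The one point that must be stated with care is the identification $\Lambda_{\lambda}\cong\Lambda/<e'>$ — that the module-theoretically defined left support is literally the convex-subcategory quotient to which the earlier results apply — and, relatedly, that $e'$ is genuinely the complement of a convex idempotent so that Proposition~\ref{crux} is available; once this is in place, everything else is a citation.
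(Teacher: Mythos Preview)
Your proof is correct and follows exactly the paper's own argument: identify $\Lambda_{\lambda}$ with $\Lambda/\langle e'\rangle$ via Proposition~\ref{crux} (using that the left support is a full convex subcategory), then apply Corollary~\ref{DS}. The additional self-contained unpacking you include is a helpful elaboration but not a different route.
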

\begin{proof}
Let $e'$ be the direct sum of the idempotents not in $\Lambda_{\lambda}$.  Since $\Lambda_{\lambda}$ is a full convex subcategory of $\Lambda$, Proposition $\ref{crux}$ implies $\Lambda_{\lambda}\cong\Lambda/<e'>$.  Our result follows from Corollary $\ref{DS}$.
\end{proof}

\begin{remark}
Given an algebra $\Lambda$, we have the dual notion to $\mathcal{L}_{\Lambda}$.  Denote by $\mathcal{R}_{\Lambda}$ the following subcategory of $\mathop{\text{ind}}\Lambda$
\[
\mathcal{R}_{\Lambda}=\{Y\in\mathop{\text{ind}}\Lambda:\id_{\Lambda}X\leq1~\text{for each}~Y\leadsto X\}.
\]
We call $\mathcal{R}_{\Lambda}$ the $\it{right~part}$ of the module category $\mathop{\text{mod}}\Lambda$.  An algebra $\Lambda$ is $\it{right~supported}$ if $\mathop{\text{add}}\mathcal{R}_{\Lambda}$ is covariantly finite.  This dual notion is mentioned in $\cite{ACT}$.  We leave the appropriate reformulations of Theorem $\ref{main3}$, Corollary $\ref{DS}$, and Proposition $\ref{smith}$ to the reader.
\end{remark}
We end this paper with another application to cluster-tilted algebras.  Unlike the class of algebras from Theorem $\ref{main1}$, full subcategories of cluster-tilted algebras are not necessarily cluster-tilted.  The following example is from $\cite{A}$.  Consider the cluster-tilted algebra $\Lambda$ given by the following quiver 
\[
\begin{tikzcd}[row sep=5ex]
&2\arrow[dl,"\beta"]\\
1 \arrow[rr,"\epsilon"] &&4 \arrow[dl,"\gamma"]\arrow[ul,"\alpha"]\\
&3\arrow[ul,"\delta"]
\end{tikzcd}
\]
with relations $\alpha\beta=\gamma\delta$, $\epsilon\alpha=0$, $\epsilon\gamma=0$, $\delta\epsilon=0$, and $\beta\epsilon=0$.  Let $e=e_1+e_4$, then $e\Lambda e$ is given by the quiver
 \[
\begin{tikzcd}[row sep=5ex]
1 \arrow[r,bend right,"\lambda"]&4\arrow[l,bend right, swap,"\epsilon"]
\end{tikzcd}
\]
with relations $\epsilon\lambda=0$, $\lambda\epsilon=0$.  This is not a cluster-tilted algebra because its quiver contains a $2$-cycle.  Instead, if we focus on convex subcategories, we obtain the desired result.
\begin{prop}
\label{cluster}
Let $\Lambda=KQ/I$ be a cluster-tilted algebra with $C$ a full convex subquiver of $Q$.  Let $e_C$ be the idempotent associated to the sum of the vertices in $C$.  Then $e_C\Lambda e_C$ is cluster-tilted.
\end{prop}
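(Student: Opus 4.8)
The plan is to deduce the statement directly from two results already at our disposal: Proposition~\ref{crux} and the theorem of Buan, Marsh and Reiten in \cite{BMR2}, which asserts that factoring a cluster-tilted algebra by the two-sided ideal generated by an idempotent produces another cluster-tilted algebra.

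First I would verify that the hypotheses of Proposition~\ref{crux} are met. Since $C$ is a full convex subquiver of $Q$, that proposition applies and yields an algebra isomorphism $e_C\Lambda e_C \cong \Lambda/\langle e'_C\rangle$, where $e'_C$ is the idempotent associated to the sum of the vertices of $Q$ not lying in $C$. Next I would invoke \cite{BMR2}: as $e'_C$ is an idempotent of the cluster-tilted algebra $\Lambda$, the factor algebra $\Lambda/\langle e'_C\rangle$ is again cluster-tilted. Composing with the isomorphism of the previous step shows that $e_C\Lambda e_C$ is cluster-tilted, which is exactly the claim.

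There is no genuine computational obstacle here; the proof is essentially a one-line composition once the hypotheses of Proposition~\ref{crux} have been checked. The only point that requires care — and the reason the $2$-cycle example preceding the statement is not a counterexample — is that the convexity of $C$ is precisely what permits replacing the endomorphism algebra $e_C\Lambda e_C$ by the idempotent factor $\Lambda/\langle e'_C\rangle$. For a non-convex idempotent $e$, the algebra $e\Lambda e$ can acquire arrows and relations (such as a $2$-cycle) that prevent it from being a quotient of $\Lambda$, so the argument breaks down exactly at that step rather than in the appeal to \cite{BMR2}.
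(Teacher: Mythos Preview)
Your proof is correct and follows essentially the same approach as the paper's own proof: both invoke Proposition~\ref{crux} to identify $e_C\Lambda e_C$ with $\Lambda/\langle e'_C\rangle$, and then apply the result of \cite{BMR2} that idempotent quotients of cluster-tilted algebras are cluster-tilted. The added paragraph explaining why convexity is essential (in light of the preceding $2$-cycle example) is a nice piece of commentary, though not strictly needed for the argument.
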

\begin{proof}
Let $e'_C$ be the idempotent associated to the sum of the vertices not in $C$.  By $\cite{BMR2}$, we know $\Lambda/<e'_C>$ is cluster-tilted.  Proposition $\ref{crux}$ gives an isomorphism $e_C\Lambda e_C\cong\Lambda/<e'_C>$ and our result follows.
\end{proof}

\noindent Department of Mathematics, University of Connecticut-Waterbury, Waterbury, CT 06702, USA
\\
\it{E-mail address}: \bf{stephen.zito@uconn.edu}

\end{document}